\definecolor{violet}{rgb}{0.5,0,0.8}
\theoremstyle{plain}
\newtheorem{theorem}{Theorem}[section]
\theoremstyle{plain}
\newtheorem{cor}[theorem]{Corollary}
\theoremstyle{plain}
\newtheorem{lemma}[theorem]{Lemma}
\theoremstyle{plain}
\theoremstyle{plain}
\newtheorem{proposition}[theorem]{Proposition}
\theoremstyle{plain}
\newtheorem{definition}[theorem]{Definition}
\theoremstyle{plain}
\theoremstyle{plain}
\newtheorem{remark}[theorem]{Remark}
\begin{document}

\title{Enlargement of filtration and predictable representation property for semi-martingales}
\author{Antonella Calzolari \thanks{Dipartimento di Matematica - Universit\`a di Roma
"Tor Vergata", via della Ricerca Scientifica 1, I 00133 Roma,
Italy }  \and Barbara Torti $^*$} \maketitle
\bigskip
\maketitle
\begin{abstract}
We present two examples of loss of the predictable representation
property for semi-martingales by enlargement of the reference
filtration.~First of all we show that the predictable
representation property for a semi-martingale $X$ does not
transfer from the reference filtration $\mathbb{F}$ to a larger
filtration $\mathbb{G}$ if the information starts growing up to a
positive time.~Then we study the case
$\mathbb{G}=\mathbb{F}\vee\mathbb{H}$ when there exists a second
special semi-martingale $Y$ enjoying the predictable
representation property with respect to $\mathbb{H}$.~We establish
conditions under which the triplet $(X,Y,[X,Y])$ enjoys the
predictable representation property with respect to $\mathbb{G}$.
\end{abstract}
\begin{keywords}
Semi-martingales, predictable representations property,
enlargement of filtration, completeness of a financial market
\end{keywords}

\textbf{AMS 2010} 60G48, 60G44, 60H05, 60H30, 91G99

\section{Introduction}
Given a filtered probability space $(\Omega, \mathcal{F},
\mathbb{F}=(\mathcal{F}_t)_{t\in[0,T]}, P)$ and a fixed
 semi-martingale $X$ on it, a classical problem in
stochastic analysis is the investigation of conditions which allow
to represent every $L^{\infty}(\Omega, \mathcal{F}_T, P)$-random
variable as the sum of an $\mathcal{F}_0$-measurable random
variable and a stochastic integral with respect to $X$.~When there
exists an equivalent local-martingale measure $Q$ for $X$ that
representation follows as soon as $X$ enjoys the
\textit{predictable representation property} (in short, p.r.p.)
with respect to the filtration $\mathbb{F}$ under $Q$
\footnote{when not necessary we will avoid to mention $Q$}, that
is any $(Q,\mathbb{F})$-local martingale can be written in the
form $m+\int_0^t\xi_sdX_s$ where $m$ is $\mathcal{F}_0$-measurable
and $\xi$ is $\mathbb{F}$-predictable (see Definition 13.1 and
Theorem 13.4 in \cite{he-wang-yan92}).~The equivalence between the
representation of all $(Q,\mathbb{F})$-local martingales and the
uniqueness
 of $Q$, modulo $\mathcal{F}_0$, is the content of a classical martingale representation result (see Theorem 13.9 in \cite{he-wang-yan92}).~In particular, under the stronger hypothesis that $Q$ is the unique equivalent
 martingale measure for $X$, $\mathcal{F}_0$ is trivial and $X$ enjoys the
 $(Q,\mathbb{F})$-p.r.p.~ (see \cite{ha-pli2} and Theorem 13.4 in \cite{he-wang-yan92}).\\
If the reference filtration $\mathbb{F}$ coincides with the
natural filtration of
 $X$, then p.r.p.~holds either for Brownian motion and Poisson process or, under
suitable assumptions,
 for diffusions with jumps and in some particular non Markovian contexts
 (see e.g.~\cite{jacod77} and \cite{davis2} and references
 therein).\\
\noindent Assuming that the p.r.p.~for
 $X$ holds, an interesting question is about
 maintenance of that property with respect to an enlarged filtration $\mathbb{G}$ in the following sense.~If $\tilde{Q}$ is an equivalent
 local-martingale measure for $X$ with respect to $\mathbb{G}$, does $X$ enjoy the $(\tilde{Q},\mathbb{G})$-p.r.p.?~When the answer is positive, we say that \textit{p.r.p. transfers from $\mathbb{F}$ to $\mathbb{G}$}.~In
 this case $\tilde{Q}$ is the unique equivalent local-martingale measure, modulo ${\mathcal{G}}_0$, for $X$ with respect to
 $\mathbb{G}$.~Otherwise p.r.p.~cannot hold with respect to any
  equivalent local-martingale measure for $X$ with respect to
 $\mathbb{G}$, and we say that \textit{p.r.p. disappears}.

\noindent This issue is related to the \textit{problem of
enlargement of filtration}, that is the investigation of
conditions under which, on a given probability space,
  $(P,\mathbb{F})$-semi-martingales are also $(P,\mathbb{G})$-semi-martingales (see the classical \cite{Jeulin} and more recently \cite{li-ru11}).\\
  Maintenance or loss of the p.r.p.~with respect to an enlarged filtration appear both in the
 literature.~Let us now recall two well-known examples of enlargement of filtration.~In
 \cite{ame}
 it is shown that the p.r.p.~is preserved in case of  \textit{initial enlargement} of $\;\mathbb{F}$ defined by $\mathcal{G}_t=\mathcal{F}_t\vee\sigma(G)$, with $G$
a random variable satisfying the condition
$P(G\in\cdot|\mathcal{F}_t)(\omega)\sim
P(G\in\cdot),\,\textrm{for\, almost\, all}\;\omega$.~This
assumption enables to show  that $X$ enjoys the
$(Q^{\mathbb{G}},\mathbb{G})$-p.r.p.~under a suitable equivalent
martingale measure $Q^{\mathbb{G}}$ on $\mathcal{G}_T$.~At the
same time, a Brownian motion $B$ fails to exhibit  the p.r.p.~in
case of  \textit{progressive enlargement} of its natural
filtration $\mathbb{F}^B$ obtained by the observation of the
occurrence of a positive random time $\tau$, which is not an
$\mathbb{F}^B$-stopping time, that is when
$\mathcal{G}_t=\cap_{s>t}\mathcal{F}^B_s\vee\sigma(\tau\wedge
s)$.~In particular this occurs when $\tau$ is a continuous random
variable independent of $B$, and consequently integrals with
respect to $B$ are not enough to represent  all the
$\mathbb{G}$-martingales.~Indeed one has to add integrals with
respect to the compensated default process $\mathbb{I}_{\tau \le
\cdot}-\int_0^{\tau \wedge \cdot}\lambda_sds$, with $\lambda=f/G$,
where $f$ and $G$ are the density function and the survival
function  of $\tau$, respectively (this result can be viewed as a
simple application of Theorem
7.5.5.1 in \cite{Jean-yor-chesney}).\\
We stress that the two examples above  differ;~more  precisely the
initial sigma-algebra in the latter remains trivial, whereas in
the former the enlargement already takes
  place up to the initial time: that is the starting sigma-algebra of the new filtration is different from $\mathcal{F}_0$.\\
In general, the fact that the progressive enlargement  by a random
time destroys the p.r.p.~is well-known; we refer to
\cite{jean-song12} for conditions under  which the p.r.p. can be
achieved  by considering a larger number of driving processes.\\
 \noindent In this paper
$X$ is a special
 $(P,\mathbb{F})$-semi-martingale which admits a
 unique equivalent martingale measure.~Therefore $\mathcal{F}_0$ is trivial and $X$ enjoys the p.r.p.~with respect to $\mathbb{F}$.~Without selecting particular models, we present two results where the p.r.p.~disappears
 when the reference filtration is enlarged by
keeping trivial the sigma-algebra at time zero.~Our aim is to give
a contribution to the investigation of the link between the
p.r.p.~and the reference filtration of the semi-martingale.~It is
well-known that p.r.p.~may fail with respect to the natural
filtration (see Example 23.11 in \cite{stoy}).~At the same time in
some cases the property holds with respect to a filtration larger
than the
natural one (see Remark \ref{rem:riflesso} in Section \ref{sec:inf=min}).\\
\noindent In our first result no assumption is introduced  on the
source of randomness giving rise to the enlargement.~We prove
that, if there exists an equivalent martingale measure for  $X$
with respect to the enlarged filtration $\mathbb{G}$, then the
p.r.p.~disappears whenever
the set $\{t\in[0,T]:\mathcal{F}_t \varsubsetneq\mathcal{G}_t\}$ has a positive minimum.\\
\noindent In our second result the enlargement is obtained by
adding the information given by a new filtration $\mathbb{H}$ such
that $\mathcal{H}_0$ is trivial.~More precisely $\mathbb{G}$
coincides with $\mathbb{F}\vee\mathbb{H}$.~We assume that there
exists a $(P,\mathbb{H})$-semi-martingale $Y$ enjoying the
p.r.p.~with respect to $\mathbb{H}$.~Moreover the martingale part
$N$ of $Y$ is $(P,\mathbb{G})$-strongly orthogonal to the
martingale part $M$ of $X$, and $X$ and $Y$ satisfy a technical
assumption which provides them the \textit{structure
condition}.~We also introduce suitable assumptions  assuring the
existence of the \textit{minimal martingale measures} for $X$ and
$Y$.~In this setting we prove that the
$(P,\mathbb{G})$-semi-martingale $(X,Y,[X,Y])$ admits a unique
equivalent martingale measure $Q$.~More precisely we show that the
triplet $(X,Y,[X,Y])$ is a \textit{basic set of
$(Q,\mathbb{G})$-orthogonal martingales} so that three is the
\textit{multiplicity of $\mathbb{G}$} in the sense of Davis and
Varaiya (see \cite{davis1}).~Therefore  the p.r.p.~by
the enlargement of filtration does not hold for $X$.\\
\noindent We stress  that the issue addressed in this paper
naturally emerges in the analysis of financial markets, where
 the discounted price of the risky asset is modeled as a semi-martingale.~In this setting generally the payoffs
 are
random variables measurable with respect to the final
sigma-algebra of the reference filtration and the investment
strategy is a predictable process.~Then, when the market is free
of arbitrage, the p.r.p.~of the discounted asset price process
provides the \textit{completeness of the market} that is the
perfect replication
 of all the  essentially bounded payoffs (see \cite {gro_po99}).~In their seminal papers Harrison and Pliska clearly state that
completeness is a joint property of the filtration and of the
asset price process and in particular they argued that the
structure of the filtration should influence completeness.~They
also provided the original version of the \textit{II Theorem of
Asset Pricing} (see \cite{ha-pli1}, \cite{ha-pli2}), even if their
statement is not completely correct in the definition of
self-financial strategies (to clarify this fact see
\cite{muller89} and the Appendix in \cite {jarr-mad91} and
\cite{cha-stri94} about the distinction between \textit{vector
completeness} and \textit{component completeness}, which is at the
origin of the imprecision of the Harrison and Pliska's
result).~Completeness has been widely studied
 when the reference filtration coincides with the natural one.~However markets with default  or
markets with better informed agents  are modeled by considering on
the probability space a filtration larger than the natural one and
most of them are not complete markets or even not arbitrage free
(see e.g.~\cite{bia-cre-07} or
\cite{jean-song12} and \cite{fontana12}).\\
 This
paper is organized as follows.~In Section~\ref{sec:sett-not} we
describe the mathematical setting and recall a classical result
about the enlargement of filtration.~\mbox{Section
\ref{sec:inf=min}} and \mbox{Section~\ref{sec:adding-semimg}} are
devoted to our results.~\mbox{Section~\ref{sec:adding-semimg}} is
divided into two subsections: in the former  we discuss the
simpler case of martingales and in the latter we extend the result
to semi-martingales.~In \mbox{Section~\ref{sec:conclusions}} we
discuss  possible connections of our results with some papers in
the recent literature. Finally we devote Section
\ref{sec:perspectives} to avenues for future research in this
area.
\section{Setting and notations}\label{sec:sett-not}
Let  $T>0$ be a finite time horizon and let $X=(X_t)_{t\in[0,T]}$
be a real valued  c\`adl\`ag square-integrable semi-martingale
defined on a given probability space $(\Omega,
\mathcal{F},\mathbb{F},P)$, with the filtration
$\mathbb{F}$ satisfying the usual conditions of right-continuity and completeness.~\\
More precisely let $X$ belong to the space $\mathcal{S}^2(P,
\mathbb{F})$ \textit{of semi-martingales} (see, e.g.,
\cite{del-me-b}),
 i.e.~let $X$ be a
special semi-martingale
 with canonical decomposition
 \begin{equation}\label{semi-martingale}
 X=X_0+M+A
 \end{equation}
such that
 \begin{equation}\label{eq-int-cond-X}
 E^P\left[X_0^2+[ M]_T+|A|^2_T\right]<+\infty.
 \end{equation}
As usual $M$ is a $(P,\mathbb{F})$-martingale, $A$ is an
$\mathbb{F}$-predictable process of finite variation, $M_0=A_0=0$,
$|A|$ denotes the total variation process of $A$ and $[M]$  the
quadratic variation process of $M$.~Note that by integrability
condition (\ref{eq-int-cond-X}) it follows
\begin{equation}\label{eq-cond_struttura}
E^P\Big[\sup_{t\in[0,T]}X_t^2\Big]<\infty.
\end{equation}
\noindent In all the paper, given a filtered probability space
$(\Omega, \mathcal{A}, \mathbb{A}, R)$ and a square-integrable
$(R,\mathbb{A})$-semi-martingale $S=(S_t)_{t\in [0,T]}$ on it,
$L_0^2(\Omega,
 \mathcal{A}_T, R)$ will denote the subset of the centered elements of $L^2(\Omega,
 \mathcal{A}_T, R)$ and  $\mathcal{L}^2(S, R,\mathbb{A})$ the space of the
$\mathbb{A}$-predictable processes $\xi$ such that
\begin{equation}\label{def:integrands}E^R\left[\int_0^T\xi^2_t\,d[S]_t\right]<+\infty.\end{equation}
 \noindent Denote by
$\mathbb{P}(X,\mathbb{F})$ the set of probability measures on
$(\Omega, \mathcal{F}_T)$ under which $X$ is a martingale and
which are equivalent to $P|_{\mathcal{F}_T}$,
 the restriction of $P$ to $\mathcal{F}_T$.~Assume that the set $\mathbb{P}(X,\mathbb{F})$ is a singleton, more precisely\vspace{0.5em}\\
\textbf{H1)}\;\textit{$\mathbb{P}(X,\mathbb{F})=\{P^{X}\}.$}
\vspace{0.5em}\\
Assumption \textbf{H1)} has two important consequences as already
recalled in the introduction.~The initial $\sigma$-algebra
$\mathcal{F}_0$ turns out to be $P$-trivial and the
$(P,\mathbb{F}$)-semi-martingale $X$ enjoys the p.r.p.~with
respect to $\mathbb{F}$ under $P^X$, that is any
$(P^X,\mathbb{F})$-local martingale admits a representation of the
form $m+\int_0^t\xi_sdX_s$ where $m$ is a constant and $\xi$ is
$\mathbb{F}$-predictable.~In particular each $H$ in $L^2(\Omega,
 \mathcal{F}_T, P^X)$ admits $P^X$-a.s.~the representation
\begin{align}\label{eq-rappr-mg1-l}
H=H_0+\int_0^T\xi^H_sdX_s,
\end{align}
with $H_0$ a constant and $\xi^H$ a process in
$\mathcal{L}^2(X,P^X,\mathbb{F})$.\vspace{0.5em}\\
Let $\mathcal{M}^2(P^X,\mathbb{F})$ be the space of the
square-integrable $(P^X,\mathbb{F})$-martingales.~Then for any $Z$
in $\mathcal{M}^2(P^X,\mathbb{F})$ an immediate application of
(\ref{eq-rappr-mg1-l}) to $H=Z_T$ proves that
 $P^X$-a.s.
 \begin{equation}\label{natural-completeness-3}
 Z_t=Z_0+\int_0^t\xi^Z_sdX_s\end{equation}
 with $Z_0$ a constant and $\xi^Z$
 a process in the space $\mathcal{L}^2(X,P^X,\mathbb{F})$.
\vspace{0.5em}\\
Set
\begin{equation}\label{def:K2}K^2(\Omega,\mathbb{F},P^X,X):=\left\{\int_0^T\xi_sdX_s,\;\xi\in
\mathcal{L}^2(X,P^X,\mathbb{F})\right\}.\end{equation} Since
$\mathcal{F}_{0}$ is trivial, representation
(\ref{eq-rappr-mg1-l}) is equivalent to the equality
\begin{equation}\label{natural-completeness-2}
L_0^2(\Omega,
 \mathcal{F}_T, P^X)=K^2(\Omega,\mathbb{F},P^X,X).\end{equation}
\begin{definition}\label{def-enlargement}
 A filtration $\mathbb{G}=(\mathcal{G}_t)_{t\in[0,T]}$ on
$(\Omega,\mathcal{F},P)$ under the standard hypotheses is an enlargement of the filtration $\mathbb{F}$ if
$$
\mathcal{F}_{t}\subset \mathcal{G}_t \text{ for
all }t\in [0,T]\;\;\;\text{ and }
\;\;\;\mathcal{F}_t\varsubsetneq\mathcal{G}_t \text{
for some } t\in [0,T].
$$
\end{definition}
\noindent Let $Q$ be a probability measure on the space $(\Omega,
\mathcal{F})$ and
 $\mathcal{M}(Q,\mathbb{F})$ the space of uniformly
integrable $(Q,\mathbb{F})$-martingales.~The following theorem holds.
\begin{theorem}\label{thm-jacod1}(Theorem 3 in \cite{Bre-yor78}) Let  $\mathbb{G}$ be  any
enlargement of $\mathbb{F}$.~Then the following
conditions are equivalent
\begin{enumerate}
    \item[(i)] $\mathcal{M}(Q,\mathbb{F})\subset
\mathcal{M}(Q, \mathbb{G})$\;\;(immersion property);
    \item[(ii)]
    for any $t$ in $[0,T]$ and any bounded $\mathcal{F}_T$-measurable random variable $Y$,
    $E^Q[Y\mid\mathcal{G}_t]$ is $\mathcal{F}_t$-measurable.
\end{enumerate}
Under any of these conditions,
$\mathcal{F}_t=\mathcal{F}_T\cap\mathcal{G}_t$.
\end{theorem}
\section{Loss of the predictable representation property:
a sufficient condition on the enlarged
filtration}\label{sec:inf=min}
Let $\mathbb{G}$ be any enlargement of $\mathbb{F}$ and let $\mathbb{P}(X,\mathbb{G})$ be the set of
probability measures on $(\Omega, \mathcal{G}_T)$ under which $X$
is a $\mathbb{G}$-martingale and which are equivalent to
$P|_{\mathcal{G}_T}$.~Consider the
assumption\vspace{0.5em}\\
\textbf{H2)}\;\textit{$\label{G-non-arbitrage}\mathbb{P}(X,\mathbb{G})\neq
\emptyset.$}\vspace{0.5em}\\
 In this section for any
$Q$ in $\mathbb{P}(X,\mathbb{G})$ the space
$K^2(\Omega,\mathbb{G},Q,X)$ is defined analogously to (\ref{def:K2}).\vspace{0.5em}\\
A consequence of Theorem \ref{thm-jacod1} is the following
proposition.
\begin{proposition}\label{strict-incl} Assume \textbf{H1)} and
\textbf{H2)} and set
\begin{equation}\label{eq-inf-incl-filtr}
u:=\inf\{t\in[0,T]:
\mathcal{F}_t\varsubsetneq\mathcal{G}_t\}.
\end{equation}
Then when $u=T$ it holds $
\mathcal{F}_T\varsubsetneq\mathcal{G}_T$ and when $u<T$ it holds
\begin{equation}\label{eq-inclusion}
\mathcal{F}_t\varsubsetneq\mathcal{G}_t \text{ for all } t\in
(u,T].
\end{equation}
\end{proposition}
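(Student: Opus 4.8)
The plan is to analyze the structure of the set $S:=\{t\in[0,T]:\mathcal{F}_t\subset\mathcal{G}_t\}$, whose infimum is $u$, and show that once the inclusion becomes strict it stays strict. The key ingredient will be Theorem~\ref{thm-jacod1}. Under \textbf{H2)} fix $Q\in\mathbb{P}(X,\mathbb{G})$; since $\mathbb{G}$ is an enlargement of $\mathbb{F}$ and $X$ is simultaneously a $Q$-martingale in both filtrations, one checks that $Q$ restricted to $\mathcal{F}_T$ lies in $\mathbb{P}(X,\mathbb{F})$, hence by \textbf{H1)} coincides with $P^X$, and in particular $Q\sim P$ on $\mathcal{F}_T$. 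The point of bringing in $Q$ is that, via the p.r.p.\ for $X$ under $\mathbb{F}$ (the first consequence of \textbf{H1)}), every $(Q,\mathbb{F})$-martingale is a stochastic integral with respect to $X$, hence is a $(Q,\mathbb{G})$-martingale; that is, condition (i) of Theorem~\ref{thm-jacod1} holds for the pair $(\mathbb{F},\mathbb{G})$ and the measure $Q$. Therefore condition (ii) holds: for every $t$, $\mathcal{F}_t=\mathcal{F}_T\cap\mathcal{G}_t$.

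With this identity in hand the argument becomes essentially set-theoretic. First I would record the monotonicity: if $t\in S$ then $t'\in S$ for all $t'\ge t$. Indeed, suppose $\mathcal{F}_t\subset\mathcal{G}_t$ but, for contradiction, $\mathcal{F}_{t'}=\mathcal{G}_{t'}$ for some $t'>t$; intersecting with $\mathcal{F}_T$ and using $\mathcal{F}_{t'}=\mathcal{F}_T\cap\mathcal{G}_{t'}$ (which is automatic) together with $\mathcal{G}_t\subseteq\mathcal{G}_{t'}$ and $\mathcal{F}_t\subseteq\mathcal{F}_{t'}$, one gets $\mathcal{G}_t\cap\mathcal{F}_T\subseteq\mathcal{F}_{t'}\cap\mathcal{F}_T=\mathcal{F}_{t'}$... the cleanest route is: since $\mathcal{F}_{t'}=\mathcal{F}_T\cap\mathcal{G}_{t'}\supseteq\mathcal{F}_T\cap\mathcal{G}_t$, and also $\mathcal{F}_t=\mathcal{F}_T\cap\mathcal{G}_t$, any strict inclusion $\mathcal{F}_t\subsetneq\mathcal{G}_t$ produces a set in $\mathcal{G}_t\setminus\mathcal{F}_t$; because $Q\sim P$, this set is not $P$-negligible, and by the enlargement hypothesis $\mathcal{G}_t\subseteq\mathcal{G}_{t'}$ it remains a $\mathcal{G}_{t'}$-set; if it were in $\mathcal{F}_{t'}$ it would lie in $\mathcal{F}_{t'}\cap\mathcal{G}_t=$ (by the intersection identity applied at $t$, since $\mathcal{F}_{t'}\subseteq\mathcal{F}_T$) $\mathcal{F}_t$, a contradiction. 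Hence $\mathcal{F}_{t'}\subset\mathcal{G}_{t'}$, i.e.\ $t'\in S$. So $S$ is an up-set, and thus $S\supseteq(u,T]$ always (with $S\ni u$ or not depending on left-continuity of the filtrations), which is exactly \eqref{eq-inclusion} when $u<T$ and gives $\mathcal{F}_T\subset\mathcal{G}_T$ when $u=T$ (taking $t'$ close to $T$ and then passing to $T$, or observing $T\in S$ directly since $S\ne\emptyset$ forces $T\in S$).

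The step I expect to be the main obstacle is the very first one: verifying carefully that condition (i) of Theorem~\ref{thm-jacod1} holds, i.e.\ that \emph{uniformly integrable} $(Q,\mathbb{F})$-martingales — not just square-integrable ones — are $(Q,\mathbb{G})$-martingales. The p.r.p.\ as stated in \eqref{eq-rappr-mg1-l}–\eqref{natural-completeness-3} is an $L^2$ statement under $P^X=Q$, so one first gets the conclusion for square-integrable $(Q,\mathbb{F})$-martingales (their terminal values are in $L^2(Q)$, hence are stochastic integrals of $X$, hence $\mathbb{G}$-martingales), and then has to upgrade to general uniformly integrable ones by an approximation/localization argument, or alternatively invoke a density argument showing that the $\mathbb{G}$-martingale property for all bounded $\mathcal{F}_T$-measurable terminal values already suffices to trigger Theorem~\ref{thm-jacod1}(ii). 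Once the intersection identity $\mathcal{F}_t=\mathcal{F}_T\cap\mathcal{G}_t$ is secured, the remainder is the routine monotonicity bookkeeping sketched above, where the only subtlety is using $Q\sim P$ (equivalently $P^X\sim P$) to ensure that a strict $\sigma$-algebra inclusion detected at time $t$ cannot be "erased" at a later time.
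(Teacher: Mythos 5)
Your proposal is correct and follows essentially the same route as the paper: use \textbf{H1)}--\textbf{H2)} plus the p.r.p.\ to verify condition (i) of Theorem~\ref{thm-jacod1} for $Q$ (with the same acknowledged density/approximation step to pass from square-integrable to uniformly integrable martingales), deduce the intersection identity $\mathcal{F}_t=\mathcal{F}_T\cap\mathcal{G}_t$, and conclude by the monotonicity of the set $\{t:\mathcal{F}_t\subset\mathcal{G}_t\}$. The only (harmless) difference is that you apply the theorem once at horizon $T$ and argue the up-set property directly, whereas the paper applies it at each horizon $T'\le T$ and argues by contraposition.
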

\begin{proof} Note that (\ref{eq-inf-incl-filtr}) is well-posed by Definition \ref{def-enlargement}.~Let $Q$ be an element of
$\mathbb{P}(X,\mathbb{G})$ (existence follows by Hypothesis
$\textbf{H2)}$).~Since $\mathcal{F}_T\subset\mathcal{G}_T$, then
$Q|_{\mathcal{F}_T}$ belongs to
$\mathbb{P}(X,\mathbb{F})$.~Hypothesis \textbf{H1)} gives
$Q|_{\mathcal{F}_T}=P^X$ so that
$\mathcal{M}^2(Q,\mathbb{F})=\mathcal{M}^2(P^X,\mathbb{F})$.~The
last equality together with the representation property
(\ref{natural-completeness-3}) and the definition of $Q$ implies
$\mathcal{M}^2(Q,\mathbb{F})\subset \mathcal{M}^2(Q,\mathbb{G})$
and by a density argument the immersion property
follows\footnote{see Proposition 3.1 in \cite{jean_lecam09} for a
different proof}.~Then, fixed $T^\prime\leq T$, applying Theorem
\ref{thm-jacod1} equality
$\mathcal{F}_t=\mathcal{F}_{T^\prime}\cap\mathcal{G}_t$ easily
follows, for all $t\in [0,T^\prime]$.~The latter identity proves
relation (\ref{eq-inclusion}) in the case $u<T$.~In fact
$\mathcal{F}_{T^\prime}= \mathcal{G}_{T^\prime}$ for some
$T^\prime\in (u,T]$ would imply $\mathcal{F}_t=\mathcal{G}_t$ for
all $t\in [0,T^\prime]$, in contradiction to the definition of
$u$.~Finally when $u=T$ the inclusion in (\ref{eq-inclusion}) is
trivial.
\end{proof}
\begin{remark}
When $ u=\min\{t\in[0,T]:
\mathcal{F}_t\varsubsetneq\mathcal{G}_t\}$ obviously relation
(\ref{eq-inclusion}) gets stronger, that is it holds
$\mathcal{F}_t\varsubsetneq\mathcal{G}_t \text{ for all } t\in
[u,T]$.
\end{remark}
\begin{remark} The above result implies that if \textbf{H1)} holds
 then condition $\mathcal{F}_{T}= \mathcal{G}_{T}$ forces
$\mathbb{P}(X,\mathbb{G})$ to be void.~In financial context this
result has a very intuitive appeal.~When the existence of a
martingale measure is equivalent to the absence of arbitrage (see
e.g.~\cite{de-scha94}), if the market is complete, adding information without changing the
payoffs' set generates arbitrage opportunities.
\end{remark}
\begin{theorem}\label{thm-nostro}
 Assume that $\mathcal{G}_0$ is  $P$-trivial and that
$u$ defined by
  (\ref{eq-inf-incl-filtr}) is a minimum.~If \textbf{H2)} holds, then for any
  $Q$ in $\mathbb{P}(X,\mathbb{G})$
  \begin{equation}\label{eq:notprp} K^2(\Omega,\mathbb{G},Q,X)\varsubsetneq
L^2_0(\Omega,\mathcal{G}_T, Q).\end{equation}
\end{theorem}
\begin{proof}
Since by assumption $\mathcal{G}_0$ is $P$-trivial, the hypothesis
that $u$ is a minimum implies $u>0$.~Let $Q$ be an element of
$\mathbb{P}(X,\mathbb{G})$ and let $A$ be a non trivial set such
that $A\not\in\mathcal{F}_u$ and $A\in\mathcal{G}_u$.~Consider the
random variable
$$L:=\mathbb{I}_A-E^{Q}[\mathbb{I}_A\mid\mathcal{F}_u].$$$\mathbb{I}_A$ is not
$\mathcal{F}_u$-measurable, so that $L$ is a non trivial element
of
 $L^2_0(\Omega,
 \mathcal{G}_T,Q)$.~If $L$ were in
 $K^2(\Omega,\mathbb{G},Q,X)$ then
 it would exist a $\mathbb{G}$-predictable process $\eta^L$ in the set $\mathcal{L}^2(X,Q,\mathbb{G})$ such that
\begin{equation}\label{L-representation2}L=\int_0^T\eta^L_sdX_s,\;\;\;\;Q\textrm{-a.s.}.\end{equation}
But we claim that $L$ satisfies
  \begin{equation}\label{L-ortogonality2}E^Q\left[L\int_0^T\eta_sdX_s\right]=0\end{equation}
  for every $\mathbb{G}$-predictable $\eta$ in $\mathcal{L}^2(X,Q,\mathbb{G})$.~Together  with
representation (\ref{L-representation2}), this would imply
$E^{Q}\left[L^2\right]=0$, i.e. $L=0$,
$Q$-a.s..\vspace{0.5em}\\
In order to prove (\ref{L-ortogonality2}) it is useful to rewrite
$E^Q\left[L\,\int_0^T\eta_sdX_s\right]$ as
$$E^Q\left[E^Q\left[L\,\left(\int_0^T\mathbb{I}_{s< u}\eta_sdX_s+\eta_u\Delta X_u\right)\mid
\mathcal{F}_u\right]\right]
+E^Q\left[E^Q\left[L\,\int_0^T\mathbb{I}_{s>
u}\eta_sdX_s\mid\mathcal{G}_u\right]\right].$$ Then equality
(\ref{L-ortogonality2}) immediately follows taking into account
that $\int_0^T\mathbb{I}_{s< u}\eta_sdX_s$ and $\eta_u\Delta X_u$
are $\mathcal{F}_u$-measurable and $L$ is
$\mathcal{G}_{u}$-measurable so that
$E^Q\left[L\,\int_0^T\eta_sdX_s\right]$ coincides with

\begin{align*}
E^{Q}\left[\left(\int_0^{u^-}\eta_sdX_s+\eta_u\Delta
X_u\right)\,E^{Q}\left[L\mid\mathcal{F}_u\right]\right]
+E^{Q}\left[L\,E^{Q}\left[\int_{u^+}^T\eta_sdX_s\mid\mathcal{G}_u\right]\right]
    \end{align*}
    and finally considering that $E^{Q}\left[L\mid\mathcal{F}_u\right]=0=E^{Q}\left[\int_{u^+}^T\eta_sdX_s\mid\mathcal{G}_u\right]$.
\end{proof}
\begin{cor}
Assume \textbf{H1)}.~Then, under the hypotheses of the previous
theorem, the p.r.p.~for $X$ is not preserved by the enlargement of
filtration from $\mathbb{F}$ to $\mathbb{G}$.
\end{cor}
\begin{proof}
$X$ enjoys p.r.p.~with respect to $\mathbb{F}$ under
$Q|_{\mathcal{F}_T}=P^X$ but the strict inclusion
(\ref{eq:notprp})
  implies that $X$ doesn't enjoy the (Q, $\mathbb{G}$)-p.r.p..
\end{proof}
\begin{remark} Theorem \ref{thm-nostro} cannot apply when $\mathbb{G}$ is a quasi-left continuous filtration.~In fact,
in this case, since
$\mathcal{G}_u=\mathcal{G}_{u^{-}}=\mathcal{F}_{u^{-}}$, the
assumption that $u$ is a minimum would imply the existence of a
non trivial set in
$\mathcal{F}_{u^{-}}$ but not in $\mathcal{F}_u$.\\
As an immediate application of Theorem \ref{thm-nostro} one can
take the progressive enlargement defined by
$\mathcal{G}_t=\cap_{s>t}\mathcal{F}_s\vee\sigma(\tau\wedge s)$
with $\tau$ any positive random variable  independent of
$\mathcal{F}_T$ taking values in a finite set.~
\end{remark}
\begin{remark}\label{rem:riflesso}
Assumption that $u$ is a minimum in Theorem \ref{thm-nostro}
cannot be dropped as proved by the following example.

    \noindent Let $B$ be a Brownian motion, then the process  $\int_0^\cdot \textrm{sgn}(B_s) dB_s$ is still a Brownian motion enjoying both the
$\mathbb{F}^{|B|}$-p.r.p.~and the $\mathbb{F}^{B}$-p.r.p.  (see
Chapter 6 in \cite{mans-yor}).~Moreover
\begin{equation*}
\inf\{t\in[0,T]:
\mathcal{F}^{|B|}_t\varsubsetneq\mathcal{F}^{B}_t\}=0,
\end{equation*}
\noindent and the set $\{t\in[0,T]:
\mathcal{F}^{|B|}_t\varsubsetneq\mathcal{F}^{B}_t\}$ doesn't have
a minimum. However  the p.r.p.~of $\int_0^\cdot \textrm{sgn}(B_s)
dB_s$ transfers from $\mathbb{F}^{|B|}$ to $\mathbb{F}^{B}$.
\end{remark}
\section{Adding the reference filtration of a
semi-martingale}\label{sec:adding-semimg}
In this section $\mathbb{H}$ is a filtration on $(\Omega,\mathcal{F},P)$ satisfying the usual conditions of right-continuity and completeness with $\mathcal{H}_0$ a trivial $\sigma$-algebra.~Moreover $Y$
 is a $(P,\mathbb{H})$-semi-martingale enjoying the p.r.p.~with respect to $\mathbb{H}$
 under an equivalent martingale measure.~The enlarged filtration is defined by
\begin{equation*}\mathbb{G}:=\mathbb{F}\vee\mathbb{H}.\end{equation*}
Next it is proved that there exists $Q\in \mathbb{P}(X, Y, [X,Y],
\mathbb{G})$ such that $(X, Y, [X,Y])$ enjoys the
$(Q,\mathbb{G})$-p.r.p..\\
As a byproduct of this result a \textit{martingale representation
property} is derived: any $(P,\mathbb{G})$-square-integrable
martingale can be uniquely represented up to a constant as sum of
integrals with respect to the martingales $M$, $N$ and $[M,N]$,
where $M$ and $N$ are the martingale parts of $X$ and $Y$,
respectively.~This result is closely related to that presented in
\cite{xue}.~In that paper the author works with the filtrations
generated by two independent, quasi-left continuous
semi-martingales.~Here this regularity for the trajectories is not
required and in place of  the independence of the two
semi-martingales the strong orthogonality of their martingale
parts is assumed.~Indeed, under the assumptions of this section,
independence of $\mathbb{F}$ and $\mathbb{H}$ and strong
orthogonality of $M$ and $N$ are equivalent conditions (see point
i) in Theorem \ref{thm-SEMI-MG-COV-NO-NULLA}).
\\
First, the case when $X$ and $Y$ are $(P,\mathbb{G})$-strongly
orthogonal martingales is considered.~Then the general case
follows by using a key result: the martingale parts $M$ and $N$ of
$X$ and $Y$ enjoy the $(P,\mathbb{F})$-p.r.p.~and the
$(P,\mathbb{H})$-p.r.p.~respectively.
\subsection{The martingale case}
First of all consider the following particular case: the process $A$ in
 decomposition (\ref{semi-martingale}) is identically zero and therefore $X$
 coincides with $M$.~Let $N$ be a square-integrable
 $(P,\mathbb{H})$-martingale on $(\Omega,\mathcal{F}, P)$.~Make the following assumption\vspace{0.5em}\\
\textbf{H1$^\prime$)}\;\textit{$\mathbb{P}(M,\mathbb{F})=\{P_{|\mathcal{F}_T}\},
\ \ \mathbb{P}(N,\mathbb{H})=\{P_{|\mathcal{H}_T}\} $}.\vspace{0.5em}\\
 \noindent It is useful to recall
the definition of strongly orthogonal square-integrable
martingales.
\begin{definition}\label{def-ort-mg} (\cite{Prott})
Two square-integrable $(P,\mathbb{G})$-martingales $U$ and $V$ are
$(P,\mathbb{G})$-strongly orthogonal if their product $UV$ is a
uniformly integrable $(P,\mathbb{G})$-martingale such that
$U_0V_0=0$.
\end{definition}
\begin{lemma}\label{lemma_ort=ind}
Under hypothesis \textbf{H1$^\prime$)}  $\mathcal{F}_T$ and
$\mathcal{H}_T$ are $P$-independent and $M_0N_0=0$ if and only if
 $M$ and  $N$ are  $(P,\mathbb{G})$-martingales and
$(P,\mathbb{G})$-strongly orthogonal.
\end{lemma}
\begin{proof}
The necessary part of the statement is straightforward, since independence together with condition $M_0N_0=0$  implies  strong orthogonality of $M$ and $N$.\\
In order to prove the sufficient condition recall that, since $M$
and $N$ are strongly orthogonal $(P,\mathbb{G})$-martingales, the
process $[M,N]=([M,N]_t)_{t\in[0,T]}$ is  a
$(P,\mathbb{G})$-martingale.~Moreover by \textbf{H1$^\prime$)} it
follows that if $A\in \mathcal{F}_T$ and $B\in \mathcal{H}_T$ then
 \begin{align}\label{rappA}
\mathbb{I}_A=P(A)+\int_0^T\xi^A_sdM_s,\;\;\;\;
\mathbb{I}_B=P(B)+\int_0^T\xi^B_sdN_s,\;\;\;\; P\textrm{-a.s.\;}
\end{align}
for $\xi^A$ and $\xi^B$  in
$\mathcal{L}^2(M,P,\mathbb{F})$ and $\mathcal{L}^2(N,P,\mathbb{H})$
respectively.~The equalities in (\ref{rappA})
imply that $P(A\cap B)$ differs from $P(A)P(B)$ by the expression
\begin{align*}
P(B)E^P\left[\int_0^T\xi^A_sdM_s\right]+P(A)E^P\left[\int_0^T\xi^B_sdN_s\right]+
E^P\left[\int_0^T\xi^A_s\,dM_s\int_0^T\xi^B_s\,dN_s\right].
\end{align*}
The above expression is null.~In fact the
$(P,\mathbb{G})$-martingale property of $M$ and $N$ and the
integrability of the integrands $\xi^A$ and $\xi^B$ imply that
the processes $\int_0^\cdot\xi^A_s\,dM_s$ and
$\int_0^\cdot\xi^B_s\,dN_s$ are centered
$(P,\mathbb{G})$-martingales.~Moreover the stable subspaces
generated by $M$ and $N$ respectively, are
$(P,\mathbb{G})$-strongly orthogonal so that also the product
$\int_0^\cdot\xi^A_s\,dM_s\cdot\int_0^\cdot\xi^B_s\,dN_s$ is a
centered $(P,\mathbb{G})$-martingale (see Lemma 2 and Theorem 36
page 180 in \cite{Prott}).
\end{proof}
\noindent Before stating the main theorem of this section it is
convenient to recall  two general results.
\begin{lemma}\label{lemma-cond.nto}
Let $\mathbb{A}$ and $\mathbb{B}$ be two independent filtrations
on $(\Omega, \mathcal{F}, P)$  and let $U$ and $V$ be two real
processes $\mathbb{A}$-adapted and $\mathbb{B}$-adapted
respectively.~Then for all $0<s<t$
$$
E^P\left[U_tV_t|\mathcal{A}_{s}\vee\mathcal{B}_{s}\right]=E^P\left[U_t|\mathcal{A}_{s}\right]E^P\left[V_t|\mathcal{B}_{s}\right].
$$
\end{lemma}
\begin{lemma}\label{lemma-cad-of-G} Let $\mathbb{A}$ and $\mathbb{B}$ be two filtrations on $(\Omega, \mathcal{F}, P)$
under standard hypotheses of completeness and
right-continuity.~Consider the filtration $\mathbb{D}$ defined at
any time $t$ by $\mathcal{D}_t=\mathcal{A}_t\vee
\mathcal{B}_t$.~If there exists a probability measure $Q$
equivalent to $P$ such that $\mathbb{A}$ and $\mathbb{B}$ are
$Q$-independent, then $\mathbb{D}$ satisfies the standard
hypotheses.
\end{lemma}
\begin{proof}
See Lemma 2.2 in \cite{ame-be-schw03}.
\end{proof}
\begin{theorem}\label{thm-COV-NO-NULLA}
Assume \textbf{H1$^\prime$)} and suppose that $M$ and $N$ are
$(P,\mathbb{G})$-strongly orthogonal martingales.~Then
$\mathbb{G}$ is a standard filtration,
$\mathbb{P}\left((M,N,[M,N]),\mathbb{G}\right)=\{P|_{\mathcal{G}_T}\}$
and the following decomposition holds
\begin{equation}\label{eq-decomposition}
L_0^2(\Omega,\mathcal{G}_T, P)=K^2(\Omega,\mathbb{G},P,M)\oplus
K^2(\Omega,\mathbb{G},P,N)\oplus K^2(\Omega,\mathbb{G},P,[M,N]).
\end{equation}
\end{theorem}
\begin{proof}
The proof will
be done in three steps: \\
(i) the first goal is to prove the $(P,\mathbb{G})$-p.r.p.~for
$(M, N, [M,N])$; \\
(ii) as a second point the following key result is proved: $[M,N]$
is $(P,\mathbb{G})$-strongly orthogonal to $M$ and to
$N$;\\
(iii) finally points (i) and (ii) allow to derive  decomposition
(\ref{eq-decomposition}).
\begin{itemize}
    \item [(i)] $(P,\mathbb{G})$-strong orthogonality of $M$ and $N$ and
Lemma \ref{lemma_ort=ind} provide the $P$-independence of
$\mathcal{F}_T$ and $\mathcal{H}_T$, so that the standard
conditions for $\mathbb{G}$ immediately follow by Lemma
\ref{lemma-cad-of-G}.~The $(P,\mathbb{G})$-p.r.p.~for
$(M,N,[M,N])$ is achieved  by proving that
$\mathbb{P}((M,N,[M,N]),\mathbb{G})=\{P\}$, or, equivalently, that
for any $Q\in\mathbb{P}((M,N,[M,N]),\mathbb{G})$,  $P$ and $Q$
coincide on the $\pi$-system $$\{A\cap B, \ A\in\mathcal{F}_{T}, \
B\in\mathcal{H}_{T}\},$$ which generates $\mathcal{G}_T$.~To this
end, note that equalities in (\ref{rappA}) hold under $Q$ so that
$Q(A\cap B)$ differs from $P(A)P(B)$ by the expression
\begin{align}
P(B)E^Q\left[\int_0^T\xi^A_sdM_s\right]+P(A)E^Q\left[\int_0^T\xi^B_sdN_s\right]+
E^Q\left[\int_0^T\xi^A_s\,dM_s\int_0^T\xi^B_s\,dN_s\right].
\end{align}
The above expression is null.~In fact \textbf{H1$^\prime$)}
implies $Q|_{\mathcal{F}_T}=P|_{\mathcal{F}_T}$ and
$Q|_{\mathcal{H}_T}=P|_{\mathcal{H}_T}$ and this in turn implies
that $\int_0^\cdot\xi^A_sdM_s$ and $\int_0^\cdot\xi^B_sdN_s$ are
centered $(Q,\mathbb{G})$-martingales.~Moreover, by definition of
$Q$, $[M,N]$ is a $(Q,\mathbb{G})$-martingale so that $MN$ is a
$(Q,\mathbb{G})$-martingale and therefore  $M$ and  $N$ are
$(Q,\mathbb{G})$-strongly orthogonal martingales, so that also the
product $\int_0^\cdot\xi^A_s\,dM_s\cdot\int_0^\cdot\xi^B_s\,dN_s$
is a centered $(Q,\mathbb{G})$-martingale.\bigskip\\
    \item [(ii)]
$[M,N]$ is $(P,\mathbb{G})$-strongly orthogonal to the
$(P,\mathbb{G})$-martingales $M$ and $N$, if and only if
$\left[M,[M,N]\right]$ and $\left[N,[M,N]\right]$ are uniformly
integrable $(P,\mathbb{G})$-martingales.\\Recall that
\begin{equation}\label{quadratic_covariation}
[M,N]_t=\langle M^c,N^c\rangle_t+\sum_{s\le t}\Delta M_s\Delta
N_s,
\end{equation}
where $M^c$ and $N^c$ are the continuous martingale part of $M$
and $N$ respectively. By Lemma \ref{lemma_ort=ind} $M^c$ and $N^c$
are independent $(P,\mathbb{G})$-martingales so that $\langle
M^c,N^c\rangle\equiv 0$, since by definition  $\langle
M^c,N^c\rangle$ is the unique $\mathbb{G}$-predictable process
with finite variation such that $M^c\,N^c-\langle M^c,N^c\rangle$
is $\mathbb{G}$-local martingale equal to 0 at time 0 (see
Subsection 9.3.2. in \cite{Jean-yor-chesney}).~Therefore
\begin{equation}\label{eq-covar}
[M,N]_t=\sum_{s\le t}\Delta M_s\Delta N_s.
\end{equation}
As a consequence
$$
\left[M,[M,N]\right]_t=\sum_{s\le t}(\Delta M_s)^2\Delta N_s.
$$
Then for $u\le t$ one has
\begin{align*}
&E^P\left[\left[M,[M,N]\right]_t|\mathcal{G}_u\right]\\&=E^P\left[\sum_{s\le
u}(\Delta M_s)^2\Delta N_s|\mathcal{G}_u\right]+E^P\left[\sum_{u<s\le t}(\Delta M_s)^2\Delta N_s|\mathcal{G}_u\right]\\
&=\left[M,[M,N]\right]_u+\sum_{u<s\le t}E^P\left[(\Delta
M_s)^2\Delta
N_s|\mathcal{G}_u\right]\\
&=\left[M,[M,N]\right]_u+\sum_{u<s\le t}E^P\left[(\Delta
M_s)^2|\mathcal{F}_u\right]E^P\left[\Delta
N_s|\mathcal{H}_u\right],
\end{align*}
where the last equality follows by Lemma \ref{lemma-cond.nto}
since $\mathbb{F}$ and $\mathbb{H}$ are $P$-independent.\\Then the
martingale property for $[M,[M,N]]$ follows by observing that
$E^P\left[\Delta N_s|\mathcal{H}_u\right]=0$, for any
$s>u$.~Finally
$\left[M,[M,N]\right]$ is uniformly integrable, since it is a $(P,\mathbb{G})$-regular martingale.\\
Analogously one gets that $[M,N]$ is $(P,\mathbb{G})$-strongly
orthogonal to $N$.\bigskip\\
    \item  [(iii)]
By point (i) it follows that (see (\ref{natural-completeness-2}))
$$L_0^2(\Omega,\mathcal{G}_T,
P)=K^2\left(\Omega,\mathbb{G},P,(M,N,[M,N])\right)$$ or
equivalently that for each $H$ in $L_0^2(\Omega,\mathcal{G}_T, P)$
there exist $\gamma^H$ in $\mathcal{L}^2(M,P,\mathbb{G})$,
$\kappa^H$ in $\mathcal{L}^2(N,P,\mathbb{G})$ and $\phi^H$ in
$\mathcal{L}^2([M,N],P,\mathbb{G})$, such that $P$-a.s.
 \begin{align}\label{rappr}
H=\int_0^T\gamma^H_sdM_s+\int_0^T\kappa^H_sdN_s+\int_0^T\phi^H_sd[M,N]_s.
\end{align}
This equality and point (ii)  entail
$$K^2(\Omega,\mathbb{G},P,[M,N])=\Big(K^2(\Omega,\mathbb{G},P,(M,N))\Big)^\perp$$
In fact the $(P,\mathbb{G})$-strong orthogonality of $[M,N]$ to
$M$ and to $N$ is equivalent to the orthogonality of any random
variable of the form $\int_0^T\phi_sd[M,N]_s$, with $\phi$ in
$\mathcal{L}^2([M,N],P,\mathbb{G})$, to random variables of the
form $\int_0^T\gamma_sdM_s+\int_0^T\kappa_sdN_s$, with  $\gamma$
in $\mathcal{L}^2(M,P,\mathbb{G})$ and $\kappa$ in
$\mathcal{L}^2(N,P,\mathbb{G})$
 (see Lemma
2 and Theorem 36 page 180 in \cite{Prott}), so that
$$K^2(\Omega,\mathbb{G},P,[M,N])\subset\Big(K^2(\Omega,\mathbb{G},P,(M,N))\Big)^\perp.$$
At the same time, by representation (\ref{rappr}),  any element of
$\Big(K^2(\Omega,\mathbb{G},P,(M,N))\Big)^\perp$ is of the form
$\int_0^T\phi_sd[M,N]_s$ so that
$$\Big(K^2(\Omega,\mathbb{G},P,(M,N))\Big)^\perp\subset K^2(\Omega,\mathbb{G},P,[M,N]).$$
Finally, by the $(P,\mathbb{G})$-strong orthogonality of $M$ and
$N$,
$$K^2(\Omega,\mathbb{G},P,(M,N))=K^2(\Omega,\mathbb{G},P,M)\oplus
K^2(\Omega,\mathbb{G},P,N).$$
\end{itemize}
\end{proof}
\begin{remark}\label{multid-extens3}
\noindent Lemma \ref{lemma_ort=ind} and Theorem
\ref{thm-COV-NO-NULLA} can be extended to the case when $M$ and
$N$ take values in $\mathbb{R}^m$ and $\mathbb{R}^n$ respectively
and $M^i$ and $N^j$ are $(P,\mathbb{G})$-strongly orthogonal
martingales for all $i=1,\ldots , m,\;j=1,\ldots , n$.

\noindent For the sake of completeness, recall that  an
$\mathbb{R}^m$-valued square integrable martingale
 $M$  enjoys the $(P,\mathbb{F})$-p.r.p. if each $H$ in
$L^2(\Omega,
 \mathcal{F}_T, P)$ can be represented as vector stochastic integral that is
\begin{align*}
H=H_0+\int_0^T\xi^HdM,
\end{align*}
with $H_0\in \mathcal{F}_0$  and $\xi^H=(\xi^H_1,...,\xi^H_m)$ an
$m$-dimensional $\mathbb{F}$-predictable  process  such that
 such that
\begin{equation*}E^P\left[\sum_{i,j}\int_0^T\xi^H_i(t)\xi^H_j(t)\,d[M^i,M^j]_t\right]<+\infty,\end{equation*}
(see \cite{cha-stri94}).
\end{remark}
\begin{example}\label{ex-prp_ortogonal_mg}
On a probability space $(\Omega, \mathcal{F},P)$ let $M$ be the
process defined at time $t$ in $[0,T]$ by
\begin{equation}\label{def:example-Mmg}M_t:=B_t+Z\,\mathbb{I}_{\{t\geq
t_0\}},\end{equation} where $B$ is a standard Brownian motion and
$Z$ a binary centered $\mathcal{F}$-measurable random variable
independent of $B$.~Write for shortness
$$H_t:=Z\,\mathbb{I}_{\{t\geq t_0\}}.$$Following a similar
procedure as in Section 9.5.2 in \cite{Jean-yor-chesney}, it can
be proved that $M$ enjoys the p.r.p.~with respect to its natural
filtration $\mathbb{F}^M=\mathbb{F}^B\vee\mathbb{F}^H$ (under
$P$).\bigskip\\In fact consider the Dol\'eans-Dade exponentials
$$\mathcal{E}^{1,\varphi}_\cdot:=\mathcal{E}\Big(\int_0^\cdot\varphi_sdB_s\Big),\;\;\;\;\mathcal{E}^{2,\varphi}_\cdot:=\mathcal{E}\Big(\int_0^\cdot\varphi_sdH_s\Big),$$
\noindent with $\varphi\in L^2([0,T])$.~Both are square-integrable
random
 processes and in particular $\mathcal{E}\Big(\int_0^t\varphi_sdH_s\Big)=1+\varphi_{t_0}Z\,\mathbb{I}_{\{t\geq t_0\}}.$
Moreover, by the product formula
   it follows
\begin{equation}\label{rappr-DD}\mathcal{E}^{1,\varphi}_T\mathcal{E}^{2,\varphi}_T=1+\int_0^T\mathcal{E}^{1,\varphi}_t\mathcal{E}^{2,\varphi}_{t^-}\varphi_t
dM_t.\end{equation}
The set
$\{\mathcal{E}^{1,\varphi}_T\mathcal{E}^{2,\varphi}_T, \varphi\in
L^2([0,T])\}$ is a total set for $L^2(\Omega,\mathcal{F}^M_T,P)$
and therefore (\ref{rappr-DD}) is
equivalent to the p.r.p.~for $M$ with respect to  $\mathbb{F}^M$ (under $P$).\bigskip\\
Let now $N$  be defined by
\begin{equation}\label{def:example-Nmg}N_t:=\tilde{B}_t+U\mathbb{I}_{\{t\geq t_0\}}\end{equation}
and assume $\tilde{B}$ and $U$ independent of $B$ and $Z$.\\Then
$\mathbb{F}^M$ and $\mathbb{F}^N$ are independent filtrations and
$M$ and $N$ are square-integrable
$\mathbb{F}^M\vee\mathbb{F}^N$-strongly orthogonal  martingales
each of them enjoying the p.r.p.~with respect to its natural
filtration (under $P$).~Moreover the covariation process $[M,N]$
at time $t$ satisfies
$$[M,N]_t=ZU\mathbb{I}_{\{t\geq t_0\}}.$$
Theorem \ref{thm-COV-NO-NULLA} applies with
$\mathbb{F}=\mathbb{F}^M$ and $\mathbb{H}=\mathbb{F}^N$ so that
any $K$ in $L^2(\Omega,
 \mathcal{F}^M_T\vee\mathcal{F}^N_T,P)$ can be represented
 $P$-a.s.~as
\begin{align}\label{eq-rapp_semi-mg ortog}
K=K_0+\int_0^T\gamma^K_tdM_t+\int_0^T\kappa^K_tdN_t+\Phi^KZU,
\end{align}
where  $\gamma^K$ and $\kappa^K$ belong to $\mathcal{L}^2(M,P,
\mathbb{F}^M\vee\mathbb{F}^N)$ and $\mathcal{L}^2(N,P,
\mathbb{F}^M\vee\mathbb{F}^N)$ respectively and $\Phi^H$ is a
square-integrable random variable
$\mathcal{F}^M_{t_0^-}\vee\mathcal{F}^N_{t_0^-}$-measurable.
\end{example}
\subsection{The semi-martingale case}\label{subsec:semim}
\noindent In the general setup of this section let $Y$ be in the
 space of semi-martingales $\mathcal{S}^2(P,
\mathbb{H})$ with canonical decomposition
\begin{equation}\label{eq_semimgY}
 Y=Y_0+N+D.
 \end{equation}
Assume that $\mathbb{P}(Y,\mathbb{H})$ is a singleton and more precisely $\mathbb{P}(Y,\mathbb{H})=\{P^Y\}$ so that
$\mathcal{H}_0$ is $P$-trivial and the semi-martingale $Y$
enjoys the p.r.p.~with respect to $\mathbb{H}$ under $P^Y$.\vspace{0.5em}\\It is convenient to write the main assumptions on $X$ and $Y$ as a unique condition:\vspace{0.5em}\\
\textbf{H1$^{\prime\prime}$)}
\;\textit{$\mathbb{P}(X,\mathbb{F})=\{P^X\},$}
\;\textit{$\mathbb{P}(Y,\mathbb{H})=\{P^Y\}$}.\vspace{1em}\\
Some other assumptions will be considered.~Denote by
$\overline{K^X(\mathbb{F})}$ the closure in $L^1(\Omega,
 \mathcal{F}_T, P)$ of the set
$\left\{\int_0^T\xi_sdX_s,\;
\xi\;\;\mathbb{F}\textrm{-predictable,
simple\,and\,bounded}\right\}$ and by
$L_+^1(\Omega,\mathcal{F}_T,P)$
 the set of the non-negative integrable random variables.~Define analogously $\overline{K^Y(\mathbb{H})}$ and $L_+^1(\Omega,\mathcal{H}_T,P)$.\vspace{0.5em}\\
\textbf{H3)}\;  $\overline{K^X(\mathbb{F})}\cap
L_+^1(\Omega,\mathcal{F}_T,P)
=\{0\},\;\;\;\;\;\overline{K^Y(\mathbb{H})}\cap
L_+^1(\Omega,\mathcal{H}_T,P) =\{0\}.$
\vspace{0.5em}\\
Assumption \textbf{H3)} together with condition
(\ref{eq-cond_struttura}) and its analogous for $Y$ provide  the
so-called \textit{structure condition} for $X$ and $Y$ (see
Theorem 8 in \cite{ans_str92}), that is the existence of a
predictable process $\alpha$ $P$-a.s.~in the space $L^2([0,T],
\mathcal{B}([0,T]), d\langle M\rangle_t)$ and of a predictable
process $\delta$ $P$-a.s.~in the space $L^2([0,T],
\mathcal{B}([0,T]), d\langle N\rangle_t)$ such that
\begin{equation}\label{struct-cond}A_t=\int_0^t\alpha_s\,d\langle M\rangle_s,\;\;\;\;\;D_t=\int_0^t\delta_s\,d\langle N \rangle_s.\end{equation}
\begin{remark}\label{rem-weak-condition} It is to note that in order to get the structure condition for $X$ hypothesis \textbf{H3)} could be
omitted by assuming $dP^X/dP|_{\mathcal{F}_T}$ in
$L^2_{loc}(\Omega,\mathcal{F},P)$ (see Proposition 4 in
\cite{schweizer92}).
\end{remark}
\noindent Finally two hypotheses assure regularity to the
Dol\'eans-Dade exponentials
$\mathcal{E}(-\int_0^\cdot\alpha_sdM_s)$ and
$\mathcal{E}(-\int_0^\cdot\delta_sdN_s)$.\vspace{1em}\\
\textbf{H4)}\;$\alpha\Delta M<1$, $P$-a.s.~,\;\;$\delta\Delta N<1$, $P$-a.s.~,\\
\textbf{H5)}\;
\begin{align*}&E^P\left[\exp\left\{\frac12\int_0^T\alpha^2_t\,d\langle M^c\rangle_t+\int_0^T\alpha^2_t\,d\langle M^d\rangle_t\right\}\right]<+\infty,\\&E^P\left[\exp\left\{\frac12\int_0^T\delta^2_t\,d\langle N^c\rangle_t+\int_0^T\delta^2_t\,d\langle N^d\rangle
_t\right\}\right]<+\infty.\end{align*}
\vspace{1em}\\In particular \textbf{H4)} implies that the $(P,\mathbb{F})$-local martingale
$\mathcal{E}(-\int_0^\cdot\alpha_sdM_s)$
 is strictly positive and \textbf{H5)} implies that it is a
   $(P,\mathbb{F})$-martingale (see Theorem 9 in \cite{pro-sh08} for further details).
\begin{definition}(\cite{an-str93})\label{def-min-mart-meas}
A  measure $Q$ in  $\mathbb{P}(X,\mathbb{F})$  is a minimal
martingale measure for $X$ if any $(P,\mathbb{F})$-local
martingale $Z$ such that $ZM$ is a $(P,\mathbb{F})$-local
martingale is a $(Q,\mathbb{F})$-local martingale.
\end{definition}
\begin{lemma}\label{lemma-minimality}
Under the previous assumptions $P^X$ is the minimal martingale
measure for $X$.
\end{lemma}
\begin{proof}
By Proposition 3.1 in  \cite{an-str93}
$\mathcal{E}(-\int_0^\cdot\alpha_sdM_s)$ coincides with the
derivative of the minimal martingale measure for $X$.~The assumed
uniqueness of the equivalent martingale measure gives the result.
\end{proof}
\noindent Before stating next result, it is useful to recall the
general result known as Yoeurp's lemma.
\begin{lemma}\label{lemma-Yoeurp}(\cite{del-me-b})
 Let M be a local martingale null at time zero and A a c\`adl\`ag process with
finite variation on every compact set.~If A is predictable then
$[M,A]$ is a local martingale.
\end{lemma}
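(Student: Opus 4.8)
The plan is to reduce the statement, by localization, to an identity of expectations and then to exploit the predictability of $A$ through a conditioning argument at predictable times; this is the only place where predictability of $A$ genuinely enters. \textbf{Localization.} A predictable finite–variation process is locally of integrable variation, so one can choose stopping times $T_n\uparrow\infty$ with $M^{T_n}$ a uniformly integrable martingale and $A^{T_n}$ of integrable total variation; since $[M,A]^{T_n}=[M^{T_n},A^{T_n}]$, it suffices to treat the case where $M$ is a uniformly integrable martingale with $M_0=0$ and $A$ is predictable with $E\left[\int_0^T|dA_s|\right]<\infty$, and then to show that $[M,A]$ is a true martingale.

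\textbf{Reduction to an expectation.} Since $A$ has finite variation, $[M,A]_t=\sum_{0<s\le t}\Delta M_s\,\Delta A_s=\int_0^t\Delta M_s\,dA_s=\int_0^tM_s\,dA_s-\int_0^tM_{s-}\,dA_s$, the continuous part of $A$ contributing nothing because $\{s:\Delta M_s\neq0\}$ is countable and hence $dA^c$–null. By the optional sampling characterization of martingales it is enough to check $E\left[[M,A]_\sigma\right]=0$ for every bounded stopping time $\sigma$; as $M^\sigma$ is again a uniformly integrable martingale and $[M,A]_\sigma=[M^\sigma,A]_\infty$, this comes down to proving $E\left[[M,A]_\infty\right]=0$, i.e.\ $E\left[\int_0^\infty M_s\,dA_s\right]=E\left[\int_0^\infty M_{s-}\,dA_s\right]$.

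\textbf{Using predictability of $A$.} Decompose $A=A^c+\sum_n\Delta A_{\tau_n}\,\mathbb{I}_{[\tau_n,\infty)}$ with $A^c$ continuous and $(\tau_n)$ predictable times exhausting the jumps of $A$, each $\Delta A_{\tau_n}$ being $\mathcal{F}_{\tau_n-}$–measurable. The continuous part gives $\int M_s\,dA^c_s=\int M_{s-}\,dA^c_s$ as above. For a predictable time $\tau_n$ a martingale has no conditional jump, $E\left[\Delta M_{\tau_n}\mid\mathcal{F}_{\tau_n-}\right]=0$ on $\{\tau_n<\infty\}$ (take an announcing sequence $S_k\uparrow\tau_n$, so $M_{\tau_n-}=\lim_kM_{S_k}=E\left[M_{\tau_n}\mid\mathcal{F}_{\tau_n-}\right]$), hence
\[
E\left[\Delta M_{\tau_n}\,\Delta A_{\tau_n}\,\mathbb{I}_{\{\tau_n<\infty\}}\right]=E\left[\Delta A_{\tau_n}\,\mathbb{I}_{\{\tau_n<\infty\}}\;E\left[\Delta M_{\tau_n}\mid\mathcal{F}_{\tau_n-}\right]\right]=0 .
\]
Summing over $n$, which is legitimate since $M$ is uniformly integrable and $A$ has integrable variation, yields $E\left[[M,A]_\infty\right]=0$; undoing the localization shows $[M,A]$ is a local martingale.

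The hard part is the last step: the identity $E\left[\int M_s\,dA_s\right]=E\left[\int M_{s-}\,dA_s\right]$, i.e.\ the fact that the Stieltjes measure of a \emph{predictable} finite–variation process does not feel the jumps of $M$ in the mean. This is exactly Yoeurp's observation; it rests on the predictable section theorem, here encapsulated in the vanishing of $E\left[\Delta M_{\tau_n}\mid\mathcal{F}_{\tau_n-}\right]$. An equivalent, more condensed route is to invoke that a predictable process of integrable variation equals its own dual predictable projection, whence $E\left[\int U_s\,dA_s\right]=E\left[\int{}^pU_s\,dA_s\right]$ for any bounded measurable $U$, and to apply this with $U=M$, for which ${}^pM=M_-$. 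The remaining steps are routine localization and bookkeeping.
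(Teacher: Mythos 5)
The paper does not actually prove this lemma: it is quoted as a classical result of Yoeurp with a bare citation to \cite{yoeurp76}, so there is no in-paper argument to compare yours against. Your reconstruction follows the standard route — reduce $[M,A]$ to the jump sum $\sum_{s}\Delta M_s\Delta A_s$ (the continuous part of $A$ contributing nothing), exhaust the jumps of the predictable process $A$ by predictable times $\tau_n$ with $\Delta A_{\tau_n}$ measurable with respect to $\mathcal{F}_{\tau_n-}$, and kill each term through $E\left[\Delta M_{\tau_n}\mid\mathcal{F}_{\tau_n-}\right]=0$, which you correctly derive from optional sampling along an announcing sequence. That is exactly where predictability enters, and the architecture is the right one.

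The one step that does not hold as justified is the integrability bookkeeping. After your localization you only know that $M$ is uniformly integrable and that $E[\mathrm{Var}(A)_\infty]<\infty$; this does not give $E\bigl[\sum_n|\Delta M_{\tau_n}||\Delta A_{\tau_n}|\bigr]<\infty$, since the natural bound is $E[2M^*_\infty\,\mathrm{Var}(A)_\infty]$ and a product of two integrable random variables need not be integrable — indeed even the individual expectations $E[\Delta M_{\tau_n}\Delta A_{\tau_n}]$ on which you condition could fail to exist, and the optional-sampling criterion you invoke already presupposes $[M,A]_t\in L^1$. The cure is to localize harder, and predictability is again what makes this possible: a predictable increasing process is locally \emph{bounded} (announce the predictable debut $\inf\{t:\mathrm{Var}(A)_t\ge n\}$ and stop just before it), and every local martingale is locally in $\mathcal{H}^1$, so one may assume $\mathrm{Var}(A)_\infty\le C$ and $E[M^*_\infty]<\infty$; then $2M^*_\infty C$ dominates everything and the interchange of sum and expectation, the conditioning step, and the martingale criterion all become legitimate. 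With that repair the proof is complete; the condensed route via ${}^pM=M_-$ and the dual predictable projection requires the same truncation.
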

\begin{proposition}\label{prop-pred-mart-rep}
Let \textbf{H1$^{\prime\prime}$)}, \textbf{H3)}, \textbf{H4)} and
 \textbf{H5)} be verified.~Then $M$ enjoys the
p.r.p.~with respect to the filtration $\mathbb{F}$
 under $P$.
\end{proposition}
\begin{proof}
Assume that $K^2(\Omega,\mathbb{F},P,M)^\perp\neq\{0\}$, that is
there  exists a non trivial centered random variable $V\in
K^2(\Omega,\mathbb{F},P,M)^\perp$ such that the
$(P,\mathbb{F})$-martingale $(V_t)_{t\in [0,T]}$ defined by
$V_t:=E^P[V|\mathcal{F}_t]$ is $(P,\mathbb{F})$-strongly
orthogonal to $M$.~By Lemma \ref{lemma-minimality} $(V_t)_{t\in
[0,T]}$ is a $(P^X,\mathbb{F})$-local martingale.~Then by
\textbf{H1$^{\prime\prime}$)}
 there exists a predictable process $\xi$ such that, for all $t\in[0,T]$,
$P^X$-a.s.~
\begin{equation}\label{eq:repr-ort-mart}
V_t=\int_0^t\xi_udX_u.
\end{equation}
As a consequence, since the covariation processes are invariant
under an equivalent change of measure, $P^X$-a.s.~and $P$-a.s.~
\begin{equation*}
[V,X]_t=\int_0^t\xi_ud[X]_u.
\end{equation*}
Under $P$ the process on the left hand side in the previous
equality is a $(P,\mathbb{F})$-local martingale.~In fact under $P$
one has $[V,X]=[V,M]+[V,A]$ and by construction $[V,M]$ is a
$(P,\mathbb{F})$-martingale and by Lemma \ref {lemma-Yoeurp} the
process
 $[V,A]$ is a  $(P,\mathbb{F})$-local martingale.
More precisely $[V,X]$ is a $(P,\mathbb{F})$-martingale since for
all $t$ in $[0,T]$
$$
\left|\left[V,X\right]_t\right|\le [V+X]_T+[V-X]_T$$ and the right
hand side of the above inequality is integrable by the assumption
(\ref{eq-int-cond-X}) on $X$ and the construction of $(V_t)_{t\in
[0,T]}$ .~Then also the process $(\int_0^t\xi_ud[X]_u)_{t\in
[0,T]}$ is a $(P,\mathbb{F})$-martingale so that for all $0<s<t<T$
and $B\in \mathcal{F}_s$ it holds $$ E^P\left[I_B\int_s^t\xi_u
d[X]_u\right]=0$$ and therefore
$$\xi=0,\,P(d\omega)d[X]_t(\omega)\textrm{-a.s}.~.$$ Finally by the equivalence
between $P$ and $P^X$ and the invariance of $[X]$ under equivalent
change of measure
$$\xi=0,\,P^X(d\omega)d[X]_t(\omega)\textrm{-a.s}.~.$$ As a
consequence by Ito isometry
\begin{equation}\label{mg-of-V}\int_0^t\xi_u d
X_u=0,\;\;P^X\textrm{-a.s.}\end{equation}
that is, by equality (\ref{eq:repr-ort-mart}),
 $V_T=0,\,P^X\textrm{-a.s.}$ which contradicts
the supposed non-triviality of $V$.
\end{proof}
\begin{remark}
In the particular case when $X$ is the solution of an Ito equation
with unique weak solution and $\mathbb{F}=\mathbb{F}^X$ the above
result immediately follows  from Theorem 9.5.4.2 in
\cite{Jean-yor-chesney}.
\end{remark}
\begin{theorem}\label{thm-SEMI-MG-COV-NO-NULLA}
Assume \textbf{H1$^{\prime\prime}$)}, \textbf{H3)},\textbf{H4)},
\textbf{H5)} and suppose that $M$ and $N$ are
$(P,\mathbb{G})$-strongly orthogonal martingales.~Then
\begin{itemize}
\item[i)] $\mathcal{F}_T$ and $\mathcal{H}_T$ are
$P$-independent, $\mathbb{G}$ fulfills the standard hypotheses and
every $W$ in $\mathcal{M}^2(P,\mathbb{G})$ can be uniquely
represented as
\begin{align*}
W_t=W_0+\int_0^t\gamma^W_sdM_s+\int_0^t\kappa^W_sdN_s+\int_0^t\phi^W_sd[M,N]_s,\;\;\;P\textrm{-a.s.}~
\end{align*}
with $\gamma^W$ in $\mathcal{L}^2(M, P, \mathbb{G})$, $\kappa^W$
in $\mathcal{L}^2(N, P, \mathbb{G})$ and $\phi^W$ in
$\mathcal{L}^2([M,N], P, \mathbb{G})$;
\item[ii)] there exists a probability measure $Q$ on $(\Omega,
\mathcal{G}_T)$ such that $(X,Y,[X,Y])$ enjoys the p.r.p.~with
respect to $\mathbb{G}$ under $Q$.~More precisely every
 $Z$ in $\mathcal{M}^2(Q,\mathbb{G})$ can be uniquely represented as
\begin{align*}
Z_t=Z_0+\int_0^t\eta^Z_sdX_s+\int_0^t\theta^Z_sdY_s+\int_0^t\zeta^Z_sd[X,Y]_s\;\;\;Q\textrm{-a.s.},
\end{align*}
with $\eta^Z$ in $\mathcal{L}^2(X, Q, \mathbb{G})$, $\theta^Z$ in
$\mathcal{L}^2(Y, Q, \mathbb{G})$ and $\zeta^Z$ in
$\mathcal{L}^2([X,Y], Q, \mathbb{G})$.
\end{itemize}
\end{theorem}
\begin{proof}
By the previous proposition and its analogous for $N$, the
martingales $M$ and $N$ satisfy condition \textbf{H1$^{\prime}$)}
so that Theorem \ref{thm-COV-NO-NULLA} applies and the first
statement is proved.\\Define
 $Q$ on $(\Omega,\mathcal{G}_T)$ by
$$\frac{dQ}{dP}:=L^X\cdot L^Y$$
where
$$L^X:=\frac{dP^X}{dP|_{\mathcal{F}_T}},\;\;\;\;\;\;L^Y:=\frac{dP^Y}{dP|_{\mathcal{H}_T}}.$$
The definition is well-posed since by point i) $L^X\cdot L^Y$ is
in $L^1(\Omega, P,\mathcal{G}_T)$.~$L^X$ and $L^Y$ are strictly
positive and therefore  $Q$ and $P|_{\mathcal{G}_T}$ are
equivalent measures.~Moreover for all $A$ in $\mathcal{F}_T$ and
$B$ in $\mathcal{H}_T$ it holds
$$
    Q(A\cap B)=E^P[\mathbb{I}_A\,L^X]\,E^P[\mathbb{I}_B\,
    L^Y],
$$
since $\mathcal{F}_T$ and $\mathcal{H}_T$ are independent under
$P$.~Using the equalities $E^P[L^X]$ =1=
$E^P[L^Y]$ one immediately gets the $Q$-independence of $\mathcal{F}_T$ and $\mathcal{H}_T$.\\
Finally $X$ is a $(Q,\mathbb{F})$-martingale since
$Q|_{\mathbb{F}}=P^X$ and it is also a $(Q,\mathbb{G})$-martingale
 by the $Q$-independence of $\mathbb{F}$ and $\mathbb{H}$.~Analogously it can be shown that  $Y$ is a
$(Q,\mathbb{G})$-martingale.~Since $X_0$ and $Y_0$ are constants,
the second statement follows by applying Theorem
\ref{thm-COV-NO-NULLA}
  to the  $(Q,\mathbb{G})$-strongly orthogonal martingales $X-X_0$ and $Y-Y_0$.
\end{proof}
\begin{remark}\label{rem:con-H2}
Under the hypotheses of the above theorem \textbf{H2)} is
verified.~In fact $Q$ belongs to $\mathbb{P}(X,\mathbb{G})$.~This
immediately follows recalling that $Q$ is equivalent to
$P|_{\mathcal{G}_T}$, coincides with $P^X$ on $\mathcal{F}_T$ and
decouples $\mathbb{F}$ and $\mathbb{H}$ in such a way that the
immersion property is verified.
\end{remark}
\begin{remark}\label{rem:R}
The $(P,\mathbb{G})$-strong orthogonality of $M$ and $N$ in
Theorem \ref{thm-SEMI-MG-COV-NO-NULLA} can be weakened assuming
the existence of a measure $P^*$ equivalent to $P$ such that $M$
and $N$ are $(P^* ,\mathbb{G})$-strongly
orthogonal~martingales.~In this case, $P$ in i) has to be replaced
by $P^*$ and $Q$ in ii) has to be changed into the measure defined
by
$\frac{dP^X}{dP^*|_{\mathcal{F}_T}}\frac{dP^Y}{dP^*|_{\mathcal{H}_T}}\,dP^*$.~Note
 that by Lemma \ref{lemma_ort=ind} $\mathcal{F}_T$ and $\mathcal{H}_T$ are $P^*$-independent.~On the other hand if there exists a measure $R$ equivalent to $P$
 such that $\mathcal{F}_T$ and $\mathcal{H}_T$ are $R$-independent, then $\frac{dP|_{\mathcal{F}_T}}{dR|_{\mathcal{F}_T}}\frac{dP|_{\mathcal{H}_T}}{dR|_{\mathcal{H}_T}}\,dR$
 defines a measure with the same properties as the previous $P^*$.\\
Finally when $\mathbb{H}$ coincides with $\mathbb{F}^Y$ and $Y$ is
$P$-independent of $\mathbb{F}$, then the above conditions are
verified with $P^*=R=P$.
\end{remark}
\begin{cor}\label{rapp semimg ortog+COV_NULLA}
Under the hypotheses of the previous theorem, $X$ and $Y$ verify
$[X,Y]_t\equiv 0$ $P$-a.s.~if and only if the
$(P,\mathbb{G})$-semi-martingale $(X, Y)$
 enjoys the p.r.p.~with respect to $\mathbb{G}$.
\end{cor}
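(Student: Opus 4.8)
The plan is to deduce this from Theorem \ref{thm-SEMI-MG-COV-NO-NULLA} and Corollary \ref{thm-ind-assets} by essentially the same device used in the proof of the theorem, namely transferring the question to the orthogonal-martingale setting via the measure $Q$ with $dQ/dP = L^X L^Y$. First I would observe that, under the standing hypotheses \textbf{H1$^{\prime\prime}$)} and the jump conditions, Proposition \ref{prop-pred-mart-rep} gives that $M$ and $N$ enjoy the p.r.p.~with respect to $\mathbb{F}^X$ and $\mathbb{F}^Y$, so $(M,N)$ falls exactly under the hypotheses of Corollary \ref{thm-ind-assets} relative to $\mathbb{G}=\mathbb{F}^M\vee\mathbb{F}^N=\mathbb{F}^X\vee\mathbb{F}^Y$. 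The key elementary remark is that $[X,Y]=[M,N]$, since $X-M=A$ and $Y-N=B$ are predictable finite-variation processes and hence do not contribute to the quadratic covariation. Therefore the condition $[X,Y]_t\equiv 0$ $P$-a.s.~is literally the condition $[M,N]_t\equiv 0$ $P$-a.s.

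For the ``only if'' direction, assume $[X,Y]_t\equiv 0$. Then $[M,N]_t\equiv 0$, so by Corollary \ref{thm-ind-assets} the pair $(M,N)$ enjoys the p.r.p.~with respect to $\mathbb{G}$; equivalently $L_0^2(\Omega,\mathcal{G}_T,P)=K^2(\Omega,\mathbb{G},P,M)\oplus K^2(\Omega,\mathbb{G},P,N)$. But $\mathcal{G}_T=\mathcal{F}^X_T\vee\mathcal{F}^Y_T$ is generated by $X$ and $Y$ jointly, so $\mathbb{G}$ is the natural filtration of the pair $(X,Y)$. Now I would transfer the representation from the $M,N$-integrals to $X,Y$-integrals: since $dX=dM+\alpha\,d\langle M\rangle$ and $dY=dN+\beta\,d\langle N\rangle$, any stochastic integral against $M$ (resp.\ $N$) can be rewritten as an integral against $X$ (resp.\ $Y$) plus a finite-variation correction, and conversely, exactly as in the proof of Proposition \ref{prop-pred-mart-rep}; combined with the structure conditions this shows $K^2$-spaces generated by $M$ and by $X$ coincide (and similarly for $N$, $Y$), whence $(X,Y)$ enjoys the p.r.p.~with respect to its natural filtration $\mathbb{G}$. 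Alternatively, and more cleanly, I would argue under $Q$: by Theorem \ref{thm-SEMI-MG-COV-NO-NULLA} every $K\in L^2(\Omega,\mathcal{G}_T,P)$ has the representation \eqref{eq-rapp_semi-mg ortog}, and the last integral $\int_0^T\zeta^K_s\,d[X,Y]_s$ vanishes identically when $[X,Y]\equiv 0$, leaving $K=K_0+\int_0^T\eta^K_s\,dX_s+\int_0^T\theta^K_s\,dY_s$, which is exactly the p.r.p.~for $(X,Y)$.

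For the converse, suppose $(X,Y)$ enjoys the p.r.p.~with respect to $\mathbb{G}$ but $[X,Y]=[M,N]$ is not identically zero. Then the random variable $[X,Y]_T=[M,N]_T$ is a non-constant element of $L^2(\Omega,\mathcal{G}_T,P)$ — here I use square-integrability of $[M,N]_T$, which follows from the integrability built into $\mathcal{S}^2$ and Remark \ref{oss-mrp-xue} — and by the uniqueness part of Theorem \ref{thm-SEMI-MG-COV-NO-NULLA} its representation \eqref{eq-rapp_semi-mg ortog} has $\zeta\not\equiv 0$, so it cannot be written with only $dX$ and $dY$ integrals; this contradicts the assumed p.r.p. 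The one point needing care is justifying that $\int_0^T\zeta_s\,d[X,Y]_s$ is genuinely not in $K^2(\Omega,\mathbb{G},P,(X,Y))$ — i.e.\ that the three $K^2$-spaces in the theorem are in direct sum with trivial pairwise intersections. This is precisely the orthogonality statement proved inside Theorem \ref{thm-COV-NO-NULLA} (strong orthogonality of $[M,N]$ to $M$ and $N$, transported to $X,Y$ under $Q$ and then back to $P$ via equivalence of measures), so the main obstacle is bookkeeping rather than a new idea: one must check that the orthogonal decomposition survives the passage between $P$ and $Q$ and between the $(M,N)$ and $(X,Y)$ generators, which is routine given the structure conditions and the equivalence $Q\sim P$.
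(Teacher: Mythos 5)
The paper itself offers no proof of this corollary: it is stated as the "analogous of Corollary \ref{thm-ind-assets}", i.e.\ as an immediate consequence of Theorem \ref{thm-SEMI-MG-COV-NO-NULLA} in the same way that Corollary \ref{thm-ind-assets} follows from Theorem \ref{thm-COV-NO-NULLA}. Your second, "cleaner" argument is exactly that intended route and is essentially correct: if $[X,Y]\equiv 0$ the third integral in \eqref{eq-rapp_semi-mg ortog} drops out, and conversely, under $Q$ the space $K^2(\Omega,\mathbb{G},Q,[X,Y])$ is the orthogonal complement of $K^2(\Omega,\mathbb{G},Q,(X,Y))$ in $L^2_0(\Omega,\mathcal{G}_T,Q)$, so the p.r.p.\ of $(X,Y)$ forces $\int_0^T\mathbb{I}_{[0,t]}(s)\,d[X,Y]_s=[X,Y]_t=0$ for every $t$, $Q$-a.s.\ and hence $P$-a.s. (Do use $[X,Y]_t$ for arbitrary $t$, or the integrands $\mathbb{I}_{[0,t]}$, rather than only $[X,Y]_T$: quadratic covariation is not monotone, so $[X,Y]_T=0$ alone does not give $[X,Y]\equiv 0$.)

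The genuine error is the "key elementary remark" that $[X,Y]=[M,N]$ because $A$ and $B$ are predictable of finite variation and "hence do not contribute to the quadratic covariation". A finite-variation process does contribute to the covariation through its jumps: $[X,Y]=[M,N]+[M,B]+[A,N]+[A,B]$ with, e.g., $[A,N]_t=\sum_{s\le t}\Delta A_s\Delta N_s$. Here $A=\int_0^\cdot\alpha_s\,d\langle M\rangle_s$ and $B=\int_0^\cdot\beta_s\,d\langle N\rangle_s$ may jump at predictable (for instance fixed) times, because the paper explicitly does \emph{not} assume quasi-left continuity; if $A$ and $N$ (or $A$ and $B$) jump at a common fixed time with positive probability, the correction terms are nonzero. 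So the identity $[X,Y]=[M,N]$, and with it your reduction of the corollary to Corollary \ref{thm-ind-assets} for the pair $(M,N)$, fails in the paper's generality. Fortunately the corollary is a statement about $[X,Y]$ itself and your $Q$-based argument never needs that identity; simply delete the first route (and the appeal to $[M,N]_T$ in the converse, replacing it by $[X,Y]_t$) and the proof stands.
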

\begin{remark}
Note that the vanishing of the quadratic covariation as sufficient
condition for the p.r.p.~of a pair of orthogonal martingales each
 enjoying the p.r.p.~with respect to its own filtration is already known (see
\cite{Lokka}).
\end{remark}
\noindent Corollary \ref{rapp semimg ortog+COV_NULLA} implies that
if $X$ and $Y$ in Theorem \ref{thm-SEMI-MG-COV-NO-NULLA} are
quasi-left continuous then the pair $(X,Y)$ enjoys the p.r.p.~with
respect to $\mathbb{G}$ under $Q$.~In fact a semi-martingale is
quasi-left continuous if and only its jumps times are totally
inaccessible so that, using
\begin{equation}
[X,Y]_t=\langle M^c,N^c\rangle_t+\sum_{s\le t}\Delta X_s\Delta Y_s
\end{equation}
and the $P$-independence of $\mathbb{F}$ and $\mathbb{H}$, it
follows that $[X,Y]\equiv 0$.~In particular the second addend at
the right hand side is zero, since two independent processes
cannot have common inaccessible
jump times with positive probability.\\
In conclusion, under the hypotheses of Theorem
\ref{thm-SEMI-MG-COV-NO-NULLA}, $[X,Y]\neq 0$ if and only if $X$
and $Y$ share accessible jump times with positive
probability.\bigskip\\
\noindent Along the lines of Example \ref{ex-prp_ortogonal_mg} it
is easy to construct a pair $(X,Y)$ of semi-martingales satisfying
\textbf{H1$^{\prime\prime}$)}, \textbf{H4)} and \textbf{H5)} with
strongly orthogonal
 martingale parts and  covariation process not identically
 zero.~In this case the structure condition is given by the model so that assumption \textbf{H3)} is superfluous.
 \begin{example}
Let $M$ be defined as in (\ref{def:example-Mmg}).~Note that
$$\langle M\rangle_t=t+E^P[Z^2\mid \mathcal{F}_{t-}]\,\mathbb{I}_{\{t\geq t_0\}}.$$
Consider a continuous function $\left(\alpha_t\right)_{t\in[0,T]}$
in $L^2([0,T])$ such that $\alpha_{t_0}=0$.~Then
$\int_0^T\alpha_sd\langle M\rangle_s=\int_0^T\alpha_sds$.~Define
the $(P,\mathbb{F}^M)$-semi-martingale $X$ by
$$
X_t:=B_t+\int_0^t\alpha_{{s}}\,ds+\,Z\,\mathbb{I}_{\{t\geq t_0\}}.
$$
$X$ satisfies the structure condition with martingale part equal
to $M$ as well as conditions \textbf{H4)} and
\textbf{H5)}.~Moreover $X$ enjoys the p.r.p.~with respect to
$\mathbb{F}^M$ under the measure $P^X$ defined by
$$dP^X:=L\,dP|_{\mathcal{F}^M_T}$$ with
$L:=\mathcal{E}(-\int_0^T\alpha_sdB_s)$.~The last statement
follows by Example \ref{ex-prp_ortogonal_mg} considering that
$\mathbb{F}^M=\mathbb{F}^B\vee\mathbb{F}^H$, the process
$\big(B_t+\int_0^t\alpha_{{s}}\,ds\big)_{t\in[0,T]}$ under $P^X$
is a standard Brownian motion independent of $Z$ and its natural
filtration coincides with $\mathbb{F}^B$ since $\alpha$ is
deterministic.~Analogously, define a second semi-martingale
$$
Y_t:=\tilde{B}_t+\int_0^t\delta_{{s}}\,ds+\,U\,\mathbb{I}_{\{t\geq
t_0\}}
$$
with $N$ as in (\ref{def:example-Nmg}) and the function
$\left(\delta_t\right)_{t\in[0,T]}$~continuous, in  $L^2([0,T])$
and such that $\delta_{t_0}=0$.~Then
$[X,Y]_t=[M,N]_t=ZU\mathbb{I}_{\{t\geq t_0\}}$.~In conclusion
Theorem \ref{thm-SEMI-MG-COV-NO-NULLA} applies to show that the
triplet $(X,Y, ZU\,\mathbb{I}_{\{\cdot\geq t_0\}})$ enjoys the
p.r.p.~with respect to $\mathbb{F}^M\vee\mathbb{F}^N$.~The pair
$(X,Y)$ instead admits infinite equivalent martingale
measures\footnote{e.g.~if $Z$ takes values in $(-z,z)$ and $U$
takes values in $(-u,u)$, then to any choice of $P(Z=z,U=u)$ in
$(0,1/2)$ it corresponds a different joint law for $(Z,U,ZU)$
which preserves the law of $(Z,U)$}.
\end{example}
\begin{cor}
Under the hypotheses of Theorem \ref{thm-SEMI-MG-COV-NO-NULLA} the
p.r.p.~for $X$  is not preserved by the enlargement of filtration
from $\mathbb{F}$ to $\mathbb{G}$.
\end{cor}
\begin{proof}
$Q|_{\mathcal{F}_T}=P^X$ so that $X$ enjoys the p.r.p.~with
respect to $\mathbb{F}$ under $Q|_{\mathcal{F}_T}$.~Moreover $Q$ belongs
 to $\mathbb{P}(X,\mathbb{G})$ but this set is not a
singleton.~In fact let $H$ be any non trivial set in
$\mathcal{H}_T$ and set $Z:=\frac{I_H}{2Q(H)}+
\frac{I_{H^c}}{2Q(H^c)}.$ Consider on $\mathcal{G}_T$ the measure
$\widetilde{Q}$ defined as the unique extension of
$\widetilde{Q}(A\cap B):=Q(A)E^Q[Z\mathbb{I}_B]$, with
$A\in\mathcal{F}_T$ and $B\in\mathcal{H}_T$.~Then it is easy to
see that
 $\widetilde{Q}$ belongs to $\mathbb{P}(X,\mathbb{G})$.
\end{proof}
\section{Conclusions}\label{sec:conclusions}
In all the paper a fundamental role is played by the existence of
an equivalent martingale measure for $X$ with respect to
$\mathbb{G}$.~In the framework of mathematical finance this
condition implies the financial market with the enlarged
information $\mathbb{G}$ to be free of arbitrage (see
\cite{de-scha94}).~This property is given as hypothesis in Section
\ref{sec:inf=min} while in Section \ref{sec:adding-semimg} it
derives from the assumptions.~As far as the results in  Section
\ref{sec:adding-semimg} are concerned, note that it is common
practice to complete the market by adding new components to the
discounted asset price vector (as recent contributions in this
sense see \cite{Davis-obloy}, \cite{Jacod-prott2010} and
\cite{co-nu-sc05}).~Theorem \ref{thm-SEMI-MG-COV-NO-NULLA}
suggests in particular how to complete the market with discounted
asset price $X$ when the available information $\mathbb{F}$ is
enlarged by the observation of an independent semi-martingale $Y$:
it is enough to assume the processes $Y$ and $[X,Y]$ as the
discounted prices of two new
assets.\vspace{0.3em}\\
Amendinger, Becherer and Schweizer in \cite{ame-be-schw03}, in the
framework of maximization of utilities, generalizing a result in
\cite{ame}, show that the p.r.p.~of $X$ transfers from
$\mathbb{F}$ to $\mathbb{G}=\mathbb{F}\vee\sigma(G)$ with $G$ a
$\mathcal{F}$-measurable random variable.~The basic assumption in
that paper is condition\\
 \textbf{D)} \textit{there exists a
probability measure $R$ on $\mathcal{F}$ equivalent to $P$ such
that $\mathcal{F}_T$ and $\sigma(G)$ are $R$-independent}.\vspace{0.2em}\\
Grorud and Pontier in \cite{gro_po99} study existence and
characterization of the risk-neutral probabilities for an insider
trader with initial information.~In particular under hypothesis
\textbf{H1)} they show the equivalence between conditions \textbf{H2)} and \textbf{D)}.\vspace{0.3em}\\
\noindent
In the present paper, in the setting of Section
\ref{sec:adding-semimg}, the role of condition \textbf{D)} is
played by one of the equivalent conditions discussed in Remark
\ref{rem:R}.~It is to note that condition \textbf{D)} in the
setting of initial enlargement of filtrations allows to prove that
p.r.p.~is preserved, whereas its generalization to the progressive
enlargement in Section \ref{sec:adding-semimg} produces the loss
of p.r.p..~Indeed a general result can be announced.
 \begin{proposition}\label{prop: GrorudPontier} Assume \textbf{H1)} and $\mathbb{G}:=\mathbb{F}\vee\mathbb{K}$ where
 $\mathbb{K}$ is a filtration on $(\Omega,\mathcal{F},P)$ satisfying the standard conditions
 and such that $\mathcal{K}_t\neq \mathcal{K}_0$ for some $t$ in $(0,T]$.~
 Let $R$ be a probability measure on $\mathcal{F}$ equivalent to $P$ such that $\mathcal{F}_T$ and $\mathcal{K}_T$ are  $R$-independent.~Then the
p.r.p.~for $X$  is not preserved by the enlargement of filtration
from $\mathbb{F}$ to $\mathbb{G}$.
 \end{proposition}
 \begin{proof}
Let $\nu$ be a probability measure on $\mathcal{K}_T$ equivalent
to $R|_{\mathcal{K}_T}$, such that the derivative
$\frac{d\nu}{dR|_{\mathcal{K}_T}}$ is not
$\mathcal{K}_0$-measurable and
$\nu|_{\mathcal{K}_0}=R|_{\mathcal{K}_0}$.~Define $Q^{\nu}$
 on $\mathcal{G}_T$ by
$$dQ^{\nu}:=\frac{dP^X}{dR|_{\mathcal{F}_T}}\frac{d\nu}{dR|_{\mathcal{K}_T}}\,dR.$$
$Q^{\nu}|_{\mathcal{K}_T}$ is equivalent to $P|_{\mathcal{K}_T}$,
$Q^{\nu}|_{\mathcal{F}_T}$ coincides with $P^X$ and $Q^{\nu}$
decouples $\mathbb{F}$ and $\mathbb{K}$.~The conclusion follows by
Theorem 13.9 in \cite{he-wang-yan92}.
\end{proof}
\noindent The different role of condition D) under initial and
progressive enlargement  generates only an apparent paradox.~In
fact, let assume \textbf{H1)}, that is $\mathcal{F}_0$ is trivial
and the $(P^X,\mathbb{F})$-p.r.p.~for $X$ holds.~Then all source
of randomness by representing $\mathbb{F}$-local martingales lies
in $X$.~When one adds new information, either initially as in [1]
or progressively as in  Section 4 of this paper, this fact
modifies two fundamental aspects of the interpretation of $X$ in
the representation of the $\mathbb{G}$-local martingales.~First,
the randomness of $X$ is no more sufficient.~More precisely, by
initial enlargement the starting value of the martingales in the
enlarged filtration becomes random, whereas by progressive
enlargement new stochastic integrators appear.~Second, one has to
extend the probability measure $P^X$ to a measure $R$ on
$\mathcal{G}_T$, in order to get the $(R, \mathbb{G})$-p.r.p.~for
the driving process (eventually multidimensional), which in case
of progressive enlargement contains $X$ as a component and in case
of initial enlargement is $X$
itself.\vspace{0.3em}\\
\noindent Given a L\'evy process $X$, Corcuera, Nualart and
Schoutens in \cite{co-nu-sc05}
  construct a basic set of orthogonal martingales for
$\mathbb{F}^X$ (\textit{Teugels martingales})  by an
orthogonalization procedure, using $X$ and its \textit{power jump
processes} $\sum_{s\le t}\Delta X^i_s,\;i\geq 2$.~That result
suggests that in the case of a pair of L\'evy processes $(X,Y)$
the representation of $\mathbb{F}^{X}\vee\mathbb{F}^Y$-local
martingales should involve  $X$, $Y$ the processes of the form
$\sum_{s\le t}\Delta X^i_s\Delta Y^j_s,\;i,j\geq 1$.~Here, in the
more general framework of semi-martingales, Theorem
\ref{thm-SEMI-MG-COV-NO-NULLA} gives conditions for representing
all $\mathbb{F}^{X}\vee\mathbb{F}^Y$-local martingales: it is
sufficient to add to $X$ and $Y$ the process of the common jumps
$\sum_{s\le t}\Delta X_s\Delta Y_s$.~One can observe that, under
the hypotheses of Theorem \ref{thm-SEMI-MG-COV-NO-NULLA}, the
family of processes given by $X$, $Y$ and  $\sum_{s\le t}\Delta
X^i_s\Delta Y^j_s,\;i,j\geq 1$ is invariant under covariation.~In
particular when $X$ and $Y$ are martingales this family is a
\textit{compensated-stable covariation family of martingales} in
the space of square-integrable martingales  (see
\cite{ditella_enge15_teor}, \cite{ditella_enge15_stoch}).~Moreover
Theorem \ref{thm-SEMI-MG-COV-NO-NULLA}  provides the minimal
number of martingales needed for the predictable representation of
this family.
\section{Perspectives}\label{sec:perspectives}
A natural development of the current results is to investigate
whether   Theorem \ref{thm-SEMI-MG-COV-NO-NULLA} continues to hold
under weaker conditions, allowing for instance to drop
 the hypotheses \textbf{H4)} and \textbf{H5)};~this issue is the object of ongoing research.~As  in
 Remark \ref{rem-weak-condition}, some regularity of the Girsanov
 derivatives should be sufficient to state the result and to extend
 it to the multidimensional case.\bigskip\\
 In a mathematical finance environment, an interesting and natural question is to    investigate the possibility to obtain  representation results  in markets driven by
processes sharing  accessible jumps times  with positive
probability. We conjecture that this goal could be achieved
exploiting the fact that, under a decoupling measure for the
assets, the covariation process is an element of the base of
orthogonal martingales in the sense of Davis and Varaiya (\cite
{davis1}). The validation of this conjecture is another topic of
ongoing and future research.
\section{Acknowledgement} We thank Giovanna Nappo for helpful
comments.~We are in debt to her for suggesting us a way to
construct the examples. We also thank the anonymous referees whose
suggestions have resulted in a major revision of the paper and
have thus improved the presentation considerably.
\bibliography{gSSRguide}
\end{document}